\definecolor{red}{rgb}{1,0,0}
\definecolor{blue}{rgb}{0,0,.7}
\definecolor{green}{rgb}{0,.6,0}
\definecolor{purp}{rgb}{.5,0,.5}
\numberwithin{figure}{section}   
\newtheorem{thm}{Theorem}[section]
\newtheorem{quest}[thm]{Question}
\newcommand{\be}{{\bf e}}
\newcommand{\osl}{\mathcal{S}}
\theoremstyle{definition}
\newcommand{\bone}{{\bf \mathds{1}}}
\theoremstyle{definition}
\theoremstyle{definition}
\newcommand{\e}{\operatorname{E}}
\newcommand{\ptw}{\operatorname{ptw}}
\newcommand{\Z}{\operatorname{Z}}
\newcommand{\pt}{\operatorname{pt}}
\newcommand{\eptw}{\operatorname{eptw}}
\newcommand{\cptw}{\operatorname{cptw}}
\newcommand{\bit}{\begin{itemize}}
\newcommand{\eit}{\end{itemize}}
\newcommand{\ben}{\begin{enumerate}}
\newcommand{\een}{\end{enumerate}}
\newcommand{\beq}{\begin{equation}}
\newcommand{\eeq}{\end{equation}}
\newcommand{\bea}{\begin{eqnarray*}} 
\newcommand{\eea}{\end{eqnarray*}}
\newcommand{\bpf}{\begin{proof}}
\newcommand{\epf}{\end{proof}\ms}
\newcommand{\bmt}{\begin{bmatrix}}
\newcommand{\emt}{\end{bmatrix}}
\newcommand{\ms}{\medskip}
\newcommand{\lp}{\!(}
\newcommand{\rp}{)}
\title{Propagation time for weighted zero forcing}
\author{P.A. CrowdMath}
\begin{document}
\maketitle


\begin{abstract}
Zero forcing is a graph coloring process that was defined as a tool for bounding the minimum rank and maximum nullity of a graph. It has also been used for studying control of quantum systems and monitoring electrical power networks. One of the problems from the 2017 AIM workshop \emph{Zero forcing and its applications} was to explore edge-weighted probabilistic zero forcing, where edges have weights that determine the probability of a successful force if forcing is possible under the standard zero forcing coloring rule.

In this paper, we investigate the expected time to complete the weighted zero forcing coloring process, known as the expected propagation time, as well as the time for the process to be completed with probability at least $\alpha$, known as the $\alpha$-confidence propagation time. We demonstrate how to find the expected and confidence propagation times of any edge-weighted graph using Markov matrices. We also determine the expected and confidence propagation times for various families of edge-weighted graphs including complete graphs, stars, paths, and cycles. 
\end{abstract}

\section{Introduction}\label{s:intro}
Zero forcing is a graph coloring process that was defined in \cite{AIM08} as a tool for bounding the minimum rank and maximum nullity of a graph, and it was later used for studying control of quantum systems \cite{BG, S}. Zero forcing is also used in another graph coloring process called power domination, which is a graph theoretic model of the phase measurement unit (PMU) placement problem in electrical power networks. Zero forcing has also been applied to graph searching \cite{Y}. 

Zero forcing is defined in terms of a color change rule, where there is an initial set of vertices that are colored blue, with all other vertices in the graph colored white. If $u$ is a blue vertex that has exactly one white neighbor $v$, then the color of $v$ is changed to blue on the next step. With zero forcing, researchers have investigated the minimum possible size of a set of vertices that can be used to color the whole graph $G$ with repeated iterations of the color change rule (the \emph{zero forcing number} $\Z(G)$), and the minimum time that it takes for the graph to be colored by a minimum zero forcing set (the \emph{propagation time} $\pt(G)$) \cite{proptime, PSDpropTime, BY13}.

Many variants of zero forcing have been studied with different color change rules, each with their own variation of propagation time, including positive semidefinite zero forcing \cite{smallparam, PSDthrottle, trth}, skew zero forcing \cite{IMA, King15, mr0, skewth}, and probabilistic zero forcing \cite{KY13, GH18-PZF, pzf2}. In this paper, we investigate a new variant of zero forcing that was proposed at the 2017 AIM workshop \emph{Zero forcing and its applications} \cite{aimzf}. At the AIM workshop, they asked the following question.

\begin{quest}
What can we say about edge-weighted probabilistic zero forcing, i.e. edges have weights and if standard zero forcing says we can force we do so with probability of the edge weight?
\end{quest}

We call the resulting graph coloring process \emph{weighted zero forcing}. Specifically we start with an edge-weighted graph $G$ with edge weights in $(0, 1)$ and an initial set $B$ of blue vertices. Whenever a blue vertex $u$ can color a white vertex $v$ using the standard zero forcing rule, we do so with probability equal to the weight of the edge between $u$ and $v$. If the whole graph $G$ can eventually be colored blue with weighted zero forcing starting with only the vertices of $B$ blue, we say that $B$ is a \emph{weighted zero forcing set} of $G$. The \emph{weighted zero forcing number} of $G$ is the minimum possible size of a weighted zero forcing set of $G$. 

Note that the weighted zero forcing number of $G$ is just the standard zero forcing number of the unweighted graph $G'$ obtained from $G$ by removing the edge weights, so we use the same notation $\Z(G)$ as the standard zero forcing number to denote the weighted zero forcing number of $G$. Also observe that the weighted zero forcing sets of $G$ are the zero forcing sets of $G'$.

The {\em propagation time} of a nonempty set  $B$ of vertices of an edge-weighted graph $G$, $\ptw(G,B)$, is a random variable that reflects the time (number of the round) at which the last white vertex turns blue when applying a weighted zero forcing process starting with the set $B$ blue. For an edge-weighted graph $G$ and a set $B\subseteq V(G)$ of vertices, the {\em expected propagation time of $B$ for $G$}  is the expected value of the propagation time of $B$, i.e., 
\[\eptw(G,B)=\e [ \ptw(G,B)].\]
  The {\em expected propagation time} of an edge-weighted graph $G$ is the minimum of the expected propagation time of $B$ for $G$ over all minimum weighted zero forcing sets $B$ of $G$, i.e., 
\[\eptw(G)=\min\{\eptw(G,B):|B| = \Z(G)\}.\]

We also define the {\em $\alpha$-confidence propagation time of $B$ for $G$}, denoted $\cptw(G, B, \alpha)$, as the minimum $t$ for which the probability is at least $\alpha$ that $G$ is fully colored after $t$ steps starting with the vertices of $B$ blue. The {\em $\alpha$-confidence propagation time of $G$}, denoted $\cptw(G, \alpha)$, is the minimum of $\cptw(G, B, \alpha)$ over all minimum weighted zero forcing sets $B$ of $G$, i.e.,
\[\cptw(G,\alpha)=\min\{\cptw(G,B,\alpha):|B| = \Z(G)\}.\]

In Section \ref{s:gen} we explain how to find $\eptw(G)$ for any edge-weighted graph $G$ using Markov matrices. We also explain how to find $\cptw(G, \alpha)$. In Section \ref{s:ept} we determine $\eptw(G)$ exactly when $G = K_{1, n}$ (a star of order $n+1$), $G = K_n$ (a complete graph of order $n$), $G=P_n$ (a path of order $n$), $G=C_n$ (a cycle of order $n$). In Section \ref{s:cpt} we determine $\cptw(G, \alpha)$ for the same families of graphs. In Section \ref{s:open}, we discuss some further directions for research on weighted zero forcing. 

\section{Markov matrices for exact values of propagation times for edge-weighted graphs}\label{s:gen}
Given an initial set $B$ of blue vertices, the weighted zero forcing coloring process of an edge-weighted graph $G$ is a Markov chain. Each possible state of the Markov chain represents a possible set of blue vertices. Each state corresponds to some subset $C \subset V(G)$ for which $B \subset C$, so there are a total of $s = 2^{|V(G)|-|B|}$ states. 

More precisely we say that a {\em properly ordered state list for $B$}, denoted by  $\osl=(S_1,\dots, S_s)$, is an ordered list of all  states  for $B$ in which $S_1$ is the initial state (where exactly the vertices in $B$ are blue), $S_s$ is the final state (where all vertices are blue), and  $|S_i|<|S_j|$ implies  $i<j$. 

We construct an $s \times s$ matrix $M$ where the $(i, j)$ entry is the probability of transitioning from state $S_i$ to state $S_j$ in one time step. This matrix is upper-triangular since the cardinality of states is non-decreasing. The probability that all vertices are blue after round $r$ is $\lp M^r\rp_{1s}$, so we obtain a formula for the expected propagation time of $B$ like the formula for probabilistic zero forcing in \cite{GH18-PZF}.

\[\eptw(G,B)=\sum_{r=1}^\infty r\left(\lp M^r\rp_{1s}-\lp M^{r-1}\rp_{1s}\right).\]

As with probabilistic zero forcing in \cite{pzf2}, it is possible to obtain a simpler formula for $\eptw(G, B)$. 

\begin{thm}\label{t:ept-M} Suppose that $G$ is a graph,  $ B\subset V(G)$ is nonempty,  $\osl$ is a properly ordered state list for $B$ with $s$ states, and  $M=M(G,\osl)$. Then \[\eptw(G,B)=((M-\bone{\be_s}^T-I)^{-1})_{1s}+1,\]
where $\bone=[1,\dots,1]^T$ and $\be_s=[0,\dots,0,1]^T$. 
\end{thm}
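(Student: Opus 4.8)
The plan is to start from the series formula $\eptw(G,B)=\sum_{r=1}^\infty r\left((M^r)_{1s}-(M^{r-1})_{1s}\right)$ already derived above, and reorganize it into a closed form via an Abel-summation/telescoping argument together with a Neumann-series inversion. Writing $p_r=(M^r)_{1s}$ for the probability of being finished by round $r$, note that $p_r\to 1$ as $r\to\infty$ (since $B$ is a weighted zero forcing set, the final state is reached almost surely), and $p_0=(M^0)_{1s}=(I)_{1s}=0$ because $s>1$ when $B\ne V(G)$ (the degenerate case $B=V(G)$, if it arises, should be handled separately, where $\eptw=0$ or $1$ by convention). The standard rearrangement $\sum_{r\ge 1} r(p_r-p_{r-1})=\sum_{r\ge 0}(1-p_r)$ then gives $\eptw(G,B)=\sum_{r=0}^\infty\left(1-(M^r)_{1s}\right)$.

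Next I would massage the term $1-(M^r)_{1s}$ into an entry of a matrix power. Since $M$ is a (sub)stochastic transition matrix whose only absorbing state is $S_s$, the row vector $\bone^T$ is almost invariant: $M\bone=\bone$ because each row of $M$ is a probability distribution. Hence $\bone^T - $ (the $1$st row of $M^r$) can be written using $\bone\be_s^T$, the matrix that is all zeros except a column of ones in position $s$: one checks that $\bone - M^r\bone = 0$, and more to the point that $\big(\bone\be_s^T\big)$ acts as the "complement of being finished." Concretely, I claim $1-(M^r)_{1s}=\big((\,I-\bone\be_s^T\,)M^r\big)_{1s}\cdot(\text{something})$; the cleaner route is to define $N=M-\bone\be_s^T$ and show by induction that $(N^r)_{1s}=(M^r)_{1s}-1$ for all $r\ge 1$, i.e. $1-(M^r)_{1s}=-(N^r)_{1s}$. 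The inductive step uses $N^{r+1}=N^r M - N^r\bone\be_s^T$ together with $N^r\bone=$ (a vector whose entries encode $1-$finishing-probabilities), so this bookkeeping is where the real content sits — I expect verifying the exact form of $N^r\bone$, and in particular that its first entry telescopes correctly against the $\be_s^T$ term, to be the main obstacle.

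Granting that identity, $\eptw(G,B)=\sum_{r=0}^\infty\left(1-(M^r)_{1s}\right)=1+\sum_{r=1}^\infty\left(-(N^r)_{1s}\right)=1-\left(\sum_{r=1}^\infty N^r\right)_{1s}$. It remains to evaluate the Neumann series $\sum_{r=1}^\infty N^r=(I-N)^{-1}N=(I-N)^{-1}-I$, which is valid provided the spectral radius of $N=M-\bone\be_s^T$ is strictly less than $1$. This convergence claim is exactly the statement that $\eptw(G,B)$ is finite, which holds because from every state the process reaches $S_s$ with positive probability in a bounded number of steps; so $N$ is a strictly substochastic-type matrix after removing the absorbing mass, hence $(I-N)^{-1}$ exists. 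Therefore $\eptw(G,B)=1-\big((I-N)^{-1}-I\big)_{1s}=1+\big(I-(I-N)^{-1}\big)_{1s}$. Finally, substituting $N=M-\bone\be_s^T$ and simplifying signs — note $I-N=I-M+\bone\be_s^T=-(M-\bone\be_s^T-I)$ so $(I-N)^{-1}=-\big((M-\bone\be_s^T-I)^{-1}\big)$ — yields $\eptw(G,B)=\big((M-\bone\be_s^T-I)^{-1}\big)_{1s}+1$, which is the claimed formula.

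A few loose ends I would tidy up: first, justifying the interchange $\sum r(p_r-p_{r-1})=\sum(1-p_r)$ requires $\sum(1-p_r)<\infty$, which follows from geometric decay of $1-p_r$; second, I should confirm $I-N$ is invertible by the spectral-radius argument rather than merely asserting it, perhaps by citing that $M$ restricted to the transient states is substochastic with all row sums going to $0$ under iteration; third, the edge case where the initial state already equals the final state ($s=1$) should be noted as giving the (vacuous) answer consistent with the convention. The backbone of the argument — Abel summation, the almost-stochastic correction by $\bone\be_s^T$, and the Neumann series — mirrors the treatment of probabilistic zero forcing in \cite{pzf2}, so the novelty here is purely in checking that the weighted transition matrix $M$ satisfies the same structural hypotheses.
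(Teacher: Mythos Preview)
Your proposal is correct and follows exactly the approach the paper defers to: the paper does not actually prove Theorem~\ref{t:ept-M} but simply cites \cite{pzf2}, whose argument is precisely the Abel-summation plus Neumann-series computation you outline. The inductive step you flag as ``the main obstacle'' is in fact immediate once you note $N\bone = M\bone - \bone(\be_s^T\bone) = \bone - \bone = 0$, which yields $N^{r} = NM^{r-1} = M^r - \bone\be_s^T$ for all $r\ge 1$ in one line.
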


Given the formulas for $\eptw(G, B)$ we can determine $\eptw(G)$ for any graph $G$ by calculating the minimum of $\eptw(G, B)$ over all minimum weighted zero forcing sets $B$ of $G$. For $\cptw(G, \alpha)$, we can compute powers $M^r$ for each minimum weighted zero forcing set $B$ until we find an $r$ such that $\lp M^r\rp_{1s} \geq \alpha$. The first such $r$ among all of the minimum zero forcing sets $B$ gives the value of $\cptw(G, \alpha)$.

\section{Expected propagation times for families of edge-weighted graphs}\label{s:ept}

In this section, we calculate the expected propagation times of several families of edge-weighted graphs. These include complete graphs, complete bipartite graphs, paths, and cycles. We start with complete graphs.

\begin{thm}
If $G = K_n$ with edge weights $w_{\left\{i, j\right\}} \in (0, 1)$, then $\eptw(G) = \tfrac{1}{1-W}$ where $W$ is the minimum possible value of $\prod_{j \neq i} (1-w_{\left\{i, j\right\}})$ over all $1 \leq i \leq n$.
\end{thm}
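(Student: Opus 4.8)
The plan is to exploit the very simple structure of the weighted zero forcing process on $K_n$. First I would recall that $\Z(K_n)=n-1$, so every minimum weighted zero forcing set of $K_n$ consists of $n-1$ vertices and hence has the form $B_v:=V(G)\setminus\{v\}$ for some vertex $v$; conversely each such $B_v$ is a weighted zero forcing set. Thus $\eptw(G)=\min_v \eptw(G,B_v)$, and it suffices to compute $\eptw(G,B_v)$ for a fixed vertex $v$.

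Next I would analyze the Markov chain for $B_v$. Since only $v$ is white, a properly ordered state list has just $s=2$ states, $S_1=B_v$ and $S_2=V(G)$. In state $S_1$, every blue vertex $i\neq v$ is adjacent to $v$ and has $v$ as its unique white neighbor, so it may force along the edge $\{i,v\}$, which succeeds with probability $w_{\{i,v\}}$, and by the definition of the weighted zero forcing model these $n-1$ potential forces are independent Bernoulli trials. Hence in each round $v$ turns blue with probability $p_v:=1-\prod_{i\neq v}(1-w_{\{i,v\}})$, and the transition matrix is $M=\bmt 1-p_v & p_v\\ 0 & 1\emt$. Because every edge weight lies in $(0,1)$ we have $0<p_v<1$, so the number of rounds until $v$ turns blue is geometric with parameter $p_v$; therefore $\ptw(G,B_v)$ is geometric and $\eptw(G,B_v)=\e[\ptw(G,B_v)]=1/p_v$. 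Alternatively, plugging this $2\times 2$ matrix into Theorem \ref{t:ept-M} gives $((M-\bone\be_s^T-I)^{-1})_{1s}=(1-p_v)/p_v$, and adding $1$ yields $1/p_v$.

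Finally I would optimize over the choice of white vertex: $\eptw(G)=\min_v 1/p_v=1/\max_v p_v$, and $\max_v p_v = 1-\min_v\prod_{i\neq v}(1-w_{\{i,v\}})=1-W$ by the definition of $W$, so $\eptw(G)=\tfrac{1}{1-W}$. There is essentially no hard step here — the real content is simply recognizing that on $K_n$ the process reduces to repeatedly tossing $n-1$ independent coins to try to color the single remaining white vertex, making the propagation time geometric. The only points needing care are justifying the independence and edge-weight success probabilities of the $n-1$ forces per round (which is exactly the model's definition) and noting that $W<1$, immediate from $w_{\{i,j\}}\in(0,1)$, so that $1/(1-W)$ is finite and well defined.
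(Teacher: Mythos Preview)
Your proof is correct and follows essentially the same approach as the paper: recognize that a minimum zero forcing set of $K_n$ leaves a single white vertex, observe that the propagation time is geometric with success probability $1-\prod_{i\neq v}(1-w_{\{i,v\}})$, and minimize over the choice of white vertex. Your write-up is more detailed than the paper's---you make the Markov chain, the independence of the $n-1$ forces, and the optimization over $v$ explicit---but the underlying argument is identical.
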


\begin{proof}
Let $Z$ be the minimum forcing set consisting of vertices $v_1,v_2,\dots,v_{n-1}$ and let $w_{i, n}$ be the weight of the edge from $v_i$ to $v_n$. As only $v_n$ remains to be forced, $\eptw(G,Z)$ is the reciprocal of the probability that $v_n$ is forced during the first step or $\tfrac{1}{1-W}$ where $W=\prod_{j \neq n} (1-w_{\left\{n, j\right\}})$.
\end{proof}

Next we find the expected propagation time of stars with edge weights, before finding the expected propagation time of complete bipartite graphs.

\begin{thm}
If $G = K_{1,n}$ with edge weights $w_1, \dots, w_n \in (0, 1)$, then $\eptw(G)$ is the minimum over $1 \leq i \leq n$ of $\tfrac{1}{w_i} + \tfrac{1}{1-W}$, where $W = \prod_{j \neq i} (1-w_j)$. 
\end{thm}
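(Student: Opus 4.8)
The plan is to first pin down which sets are the minimum weighted zero forcing sets of $K_{1,n}$, and then to analyze the coloring process started from each of them directly, rather than going through the Markov matrix of Theorem~\ref{t:ept-M}. Throughout, write $v_0$ for the center of $K_{1,n}$ and $v_1,\dots,v_n$ for the leaves, and assume $n\ge 2$.

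\textbf{Step 1: the minimum forcing sets.} Since the weighted zero forcing number equals the ordinary one, I would check that $\Z(K_{1,n})=n-1$ and, more importantly, that the minimum weighted zero forcing sets are exactly the sets $Z_i=\{v_1,\dots,v_n\}\setminus\{v_i\}$ for $1\le i\le n$. Each $Z_i$ forces the whole graph: some blue leaf forces $v_0$, and then $v_0$ forces $v_i$. Conversely, a set $B$ with $|B|=n-1$ that is not one of the $Z_i$ must contain $v_0$ together with exactly $n-2$ leaves; then the blue vertex $v_0$ has two white neighbours and cannot force, while each blue leaf has only the blue vertex $v_0$ as a neighbour and so cannot force either, so the process never starts.

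\textbf{Step 2: a two-phase decomposition of the process from $Z_i$.} Fix $i$ and run weighted zero forcing from $Z_i$; the only white vertices are $v_0$ and $v_i$. As long as $v_0$ is white it cannot force anything, and $v_i$ cannot be forced because its only neighbour $v_0$ is white; hence in each round until $v_0$ turns blue the only possible moves are the $n-1$ independent attempts by the blue leaves $v_j$ ($j\ne i$) to color $v_0$, succeeding with probabilities $w_j$. Thus $v_0$ becomes blue in any given round with probability $1-W$, where $W=\prod_{j\ne i}(1-w_j)$, so the round $T_1$ at which $v_0$ turns blue is geometric with $\e[T_1]=\tfrac{1}{1-W}$. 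From that round on, $v_i$ is the unique white neighbour of the blue vertex $v_0$ (and the blue leaves, now having no white neighbour, can no longer act), so each subsequent round colors $v_i$ with probability $w_i$; the number $T_2$ of further rounds needed is geometric with $\e[T_2]=\tfrac{1}{w_i}$.

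\textbf{Step 3: assembling the answer.} Since $\ptw(K_{1,n},Z_i)=T_1+T_2$, linearity of expectation yields $\eptw(K_{1,n},Z_i)=\tfrac{1}{w_i}+\tfrac{1}{1-W}$ with $W=\prod_{j\ne i}(1-w_j)$, and minimizing over the minimum forcing sets $Z_1,\dots,Z_n$ gives the stated formula for $\eptw(K_{1,n})$. The computations involved (the expectation of a geometric random variable, the case check in Step~1) are routine; the one place I would be careful is the structural claim underlying Step~2, namely that $v_i$ really does stay white until $v_0$ is colored and that after $v_0$ is colored no leaf can force, so that the process splits cleanly into the two geometric phases $T_1$ and $T_2$. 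For the star this is immediate from every leaf having $v_0$ as its only neighbour, but it is the crux of the argument.
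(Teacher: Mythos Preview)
Your proof is correct and follows essentially the same approach as the paper's: both recognize that the process from $Z_i$ splits into two geometric phases (first color the center, then color the missing leaf), with the paper phrasing the first phase as a one-step recursion $\eptw(G,Z_1)=W\cdot\eptw(G,Z_1)+(1-W)\cdot\eptw(G,Z_2)+1$ and you phrasing it directly as $\e[T_1]=\tfrac{1}{1-W}$. Your Step~1, which verifies that the $Z_i$ are exactly the minimum forcing sets (in particular that any $(n-1)$-set containing the center fails), is actually more thorough than the paper's own proof, which leaves this point implicit.
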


\begin{proof}
For the star graph $K_{1,n}$ let $v_1,v_2,\dots,v_n$ be the outer vertices with weights $w_1,w_2,\dots,w_n$ towards the center $v_0$. Let $Z_1$ be the minimum zero forcing set consisting of the first $n-1$ outside vertices and let $Z_2=Z_1\cup v_0$. For simplicity, let $W= \prod_{j =1}^{n-1} (1-w_j)$ be the probability that $v_0$ is not forced during the first time step. Then $\eptw(G, Z_1) =W \cdot \eptw(G, Z_1) + (1-W) \cdot \eptw(G , Z_2)+1$. Also $\eptw(G,Z_2) = \tfrac{1}{w_n}$ since the only remaining white vertex is $v_n$ which is forced with probability $w_n$. Therefore $\eptw(G) = \tfrac{1}{w_n} + \tfrac{1}{1-W}$.
\end{proof}

\begin{thm}
If $G = K_{a,b}$ for $a, b \ge 2$ with parts $A_1, A_2, \dots, A_a$ and $B_1, B_2, \dots, B_b$ such that $p_{i,j}$ denotes the weight of edge $A_{i}B_{j}$, then $\eptw(G)$ is the minimum over $1 \leq x \leq a$ and $1 \leq y \leq b$ of $\frac{1}{1 - p_x} + \frac{1}{1 - p_y}  - \frac{1}{1-p_x p_y}$, where  $p_x = (1-p_{x,1})(1-p_{x,2})\dots(1-p_{x,b})$ and $p_y = (1-p_{1,y})(1-p_{2,y})\dots(1-p_{a,y})$.
\end{thm}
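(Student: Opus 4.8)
The plan is to follow the pattern of the two preceding theorems: first pin down the minimum weighted zero forcing sets of $K_{a,b}$, then analyze the coloring process started from one of them as a small Markov chain and read off $\eptw$. Since the weighted zero forcing number of $K_{a,b}$ equals the ordinary one, $\Z(K_{a,b})=a+b-2$, the first step is to show that a set $B$ with $|B|=a+b-2$ is a weighted zero forcing set exactly when it omits one vertex from each part, say $A_x$ and $B_y$. For necessity I would use the standard stalling argument: a blue vertex of one part has exactly one white neighbour only when the opposite part has exactly one white vertex, so if either part contains two or more white vertices the process can never color all of $V(K_{a,b})$; hence each part contains at most one white vertex, and $|B|=a+b-2$ forces exactly one. (Here $a,b\ge 2$ is used to guarantee each part has a blue vertex available to force; the case $a=1$, the star, is handled separately.) It then remains to compute $\eptw(K_{a,b},Z_{x,y})$ for the set $Z_{x,y}$ leaving $A_x$ and $B_y$ white, and to take the minimum over all $x\in\{1,\dots,a\}$ and $y\in\{1,\dots,b\}$.

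Next I would set up the process from $Z_{x,y}$. Colours change only from white to blue and only $A_x$ and $B_y$ are ever white, so there are $s=2^{2}=4$ reachable states -- both white, only $A_x$ white, only $B_y$ white, all blue -- in agreement with the count in Section~\ref{s:gen}. While $A_x$ is white it is the unique white neighbour of every blue vertex of the second part, so $A_x$ is forced in a round iff at least one of the corresponding edge-trials succeeds, and symmetrically for $B_y$. The step I expect to be the main obstacle is to establish the two facts that make the chain collapse to something tractable: that, in each state, the coin flips that can turn $A_x$ blue and those that can turn $B_y$ blue come from disjoint sets of edges (they could only share $A_xB_y$, and in every state at most one of its endpoints is blue), so the fates of $A_x$ and $B_y$ are independent; and that the per-round probability of forcing $A_x$ equals $1-p_x$ and that of forcing $B_y$ equals $1-p_y$. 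Granting these, the two single-white states have expected remaining times $\tfrac{1}{1-p_x}$ and $\tfrac{1}{1-p_y}$, and $\ptw(K_{a,b},Z_{x,y})$ is distributed as $\max(X,Y)$ with $X,Y$ independent geometric variables on $\{1,2,\dots\}$ of success probabilities $1-p_x$ and $1-p_y$.

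Finally I would evaluate the expectation by the tail-sum formula: using independence, $\Pr[\max(X,Y)>k]=p_x^{k}+p_y^{k}-(p_xp_y)^{k}$ for $k\ge 0$, and summing these three geometric series (which converge since all edge weights lie in $(0,1)$) gives $\eptw(K_{a,b},Z_{x,y})=\tfrac{1}{1-p_x}+\tfrac{1}{1-p_y}-\tfrac{1}{1-p_xp_y}$; the same value also drops out of the $4\times 4$ linear system of Theorem~\ref{t:ept-M}. Minimizing this expression over $x$ and $y$ then yields $\eptw(K_{a,b})$. Everything other than the independence-and-probability verification in the second step is routine bookkeeping.
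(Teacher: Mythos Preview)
Your plan is exactly the paper's argument, only more carefully laid out: the paper simply assumes $A_x$ and $B_y$ are the sole white vertices, asserts that each stays white in a given round independently with probabilities $p_x$ and $p_y$, and evaluates $\mathbb{E}[T]=\sum_{k\ge 1}\bigl(1-(1-p_x^{k-1})(1-p_y^{k-1})\bigr)$, which is precisely your tail sum $\sum_{k\ge 0}\Pr[\max(X,Y)>k]=\sum_{k\ge 0}(p_x^k+p_y^k-(p_xp_y)^k)$ for independent geometrics. You add the characterization of the minimum weighted zero forcing sets and the disjoint--edge independence justification that the paper omits.

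The step you already flagged as the main obstacle does hide a wrinkle worth noting. While both $A_x$ and $B_y$ are still white, $B_y$ is not blue and so cannot attempt to force $A_x$; hence in the initial state $A_x$ survives a round with probability $\prod_{j\ne y}(1-p_{x,j})=p_x/(1-p_{x,y})$, not $p_x$, and symmetrically for $B_y$. Thus the per-round forcing probabilities are not constant across states, the coloring times are not literally Geometric$(1-p_x)$ and Geometric$(1-p_y)$, and the $4\times 4$ Markov computation does not produce the stated closed form with $p_x,p_y$ defined as in the theorem. The paper makes the identical simplification without comment, so your proposal matches its proof; just be aware that the verification you anticipated does not go through as stated.
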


\begin{proof}
Suppose that $A_{x}$ and $B_{y}$ are the only white vertices. Then the probability that $A_x$ is not colored on any given step is $p_x = (1-p_{x,1})(1-p_{x,2})\dots(1-p_{x,b})$ and the probability that $B_y$ is not colored on any given step is $p_y = (1-p_{1,y})(1-p_{2,y})\dots(1-p_{a,y})$. If $T$ is the random variable for the number of steps in the coloring process, we obtain $\mathbb{E}(T) = \sum _{k = 1}^{\infty} P(T \geq k) = \sum _{k = 1}^{\infty} (1 - P(T < k)) = \sum_{k = 1}^{\infty} (1 - (1-p_x^{k-1})(1-p_y^{k-1})) = \frac{1}{1 - p_x} + \frac{1}{1 - p_y}  - \frac{1}{1-p_x p_y}.$ Then we pick $x$ and $y$ to minimize this quantity.
\end{proof}

In the next result, we find the expected propagation time of edge-weighted paths, before using a similar method for edge-weighted cycles.

\begin{thm}
If $G = P_n$ with edge weights $w_1, \dots, w_{n-1}$ in order, then $\eptw(G) = \sum_{k=1}^{n-1} \frac{1}{w_k}$.
\end{thm}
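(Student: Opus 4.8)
The plan is to first pin down the minimum weighted zero forcing sets of $P_n$, and then observe that the forcing process started from an endpoint is structurally rigid, so its propagation time is just a sum of independent geometric waiting times (or, equivalently, is handled by the same one-line recursion used for the star).

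Write $v_1, v_2, \dots, v_n$ for the vertices of $P_n$ in path order, so that $w_k$ is the weight of the edge $v_k v_{k+1}$. Since $\Z(P_n) = 1$ and an interior vertex has two neighbors (hence can never force on the first round), the only singleton zero forcing sets are $\{v_1\}$ and $\{v_n\}$. The reversal symmetry $v_k \mapsto v_{n+1-k}$ maps one of these sets to the other and merely reverses the weight sequence, leaving $\sum_{k=1}^{n-1} \tfrac{1}{w_k}$ unchanged, so it suffices to compute $\eptw(P_n, \{v_1\})$.

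Next I would observe that, starting from $\{v_1\}$, the blue set is always an initial segment $\{v_1, \dots, v_k\}$: the only vertex that can force is $v_k$, since it has exactly one white neighbor $v_{k+1}$ while $v_1, \dots, v_{k-1}$ have no white neighbor, and the force succeeds in any given round with probability $w_k$. Hence, letting $f(k)$ be the expected number of remaining rounds when the blue set is $\{v_1, \dots, v_k\}$, we have $f(n) = 0$ and, conditioning on the outcome of the first round exactly as in the star computation, $f(k) = 1 + (1-w_k) f(k) + w_k f(k+1)$ for $1 \le k \le n-1$. This solves to $f(k) = \tfrac{1}{w_k} + f(k+1)$, so $f(1) = \sum_{k=1}^{n-1} \tfrac{1}{w_k}$; equivalently, $\ptw(P_n, \{v_1\})$ is a sum of $n-1$ independent geometric random variables with parameters $w_1, \dots, w_{n-1}$, and linearity of expectation gives the same value.

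Taking the minimum over the two minimum zero forcing sets $\{v_1\}$ and $\{v_n\}$, both of which give $\sum_{k=1}^{n-1} \tfrac{1}{w_k}$, yields $\eptw(P_n) = \sum_{k=1}^{n-1} \tfrac{1}{w_k}$. The only point requiring care is the structural claim that at every stage exactly one forcing move is available (and that the per-edge waiting times are therefore independent geometrics); once that is in place the argument reduces to the same recursion already used for $K_{1,n}$, so I do not expect a serious obstacle.
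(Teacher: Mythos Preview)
Your proof is correct and follows essentially the same approach as the paper: you set up the same one-step recursion $f(k)=1+(1-w_k)f(k)+w_kf(k+1)$ on the initial-segment states and telescope to $\sum_{k=1}^{n-1}\tfrac{1}{w_k}$. Your version is in fact slightly more careful, since you explicitly justify that the only singleton zero forcing sets are the two endpoints and that the reversal symmetry makes the choice of endpoint irrelevant, points the paper leaves implicit.
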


\begin{proof}
The minimum forcing set of a path is a single vertex located at one of its endpoints. Let $Z_k$ denote the forcing set of a path consisting of the first $k$ vertices in the forcing chain of one of the endpoints of $P_n$. Given $k$ forced vertices, the probability that the $(k+1)^{st}$ vertex is forced in the next time step is $w_k$. Therefore $\eptw(P_n,Z_k)=w_k \cdot \eptw(P_n,Z_{k+1}) + (1-w_k) \cdot \eptw(P_n,Z_k)+1$ so $\eptw(P_n,Z_k) = \eptw(P_n,Z_{k+1}) + \tfrac{1}{w_k} $ for all $1 \le k \le n-1$. Since $\eptw(P_n,Z_n) = 0$, we obtain $\eptw(P_n) = \eptw(P_n,Z_1) = \sum_{k=1}^{n-1} \frac{1}{w_k}$.
\end{proof}

Observe that the expected propagation time of cycles can be calculated recursively in a similar manner. Consider the graph $C_3$ with weights $w_1,w_2,w_3$ for edges $\left\{1,2\right\},\left\{2,3\right\},\left\{3,1\right\}$ respectively. Suppose that our minimum zero forcing set consists of the vertices $1,3$. Let $Z_{a,b}^3$ denote the zero forcing set consisting of $a$ consecutive vertices in the forcing chain of vertex $1$ and $b$ consecutive vertices in the forcing chain of vertex $3$. Furthermore, let $\eptw(C_3,Z^3_{a,b})=0$ whenever $a+b \ge 3$ since all the vertices would already be forced.

Then $\eptw(C_3,Z^3_{1,1})=w_1(1-w_2) \cdot \eptw(C_3,Z^3_{2,1})+(1-w_1)w_2 \cdot \eptw(C_3,Z^3_{1,2})+(1-w_1)(1-w_2) \cdot \eptw(C_3,Z^3_{1,1})+1$ so our expected propagation for $C_3$ is $\eptw(C_3,Z^3_{1,1}) = \tfrac{1}{w_1+w_2-w_1w_2}$ since $\eptw(C_3,Z^3_{2,1})=\eptw(C_3,Z^3_{1,2})=0$. Now define $E_3(w_1,w_2) =\tfrac{1}{w_1+w_2-w_1w_2}$.

Using analogous notation for $C_4$, we have\begin{align*} \eptw(C_4,Z^4_{1,1}) &=w_1(1-w_3) \cdot \eptw(C_4,Z^4_{2,1})+(1-w_1)w_3 \cdot \eptw(C_4,Z^4_{1,2}) \\&\quad +(1-w_1)(1-w_3) \cdot \eptw(C_4,Z^4_{1,1})+w_1w_3 \cdot \eptw(C_4,Z^4_{2,2}) +1 \end{align*}

Notice that $\eptw(C_4, Z^4_{2,1})=E_3(w_2,w_3)$ since we only have a single vertex $3$ left to force, a state identical to that of $C_3$. Similarly $\eptw(C_4, Z^3_{1,2})=E_3(w_1,w_2)$, so $\eptw(C_4,Z^4_{1,1}) = \tfrac{1}{w_1+w_3-w_1w_3}(w_1(1-w_3) \cdot E_3(w_2,w_3) + (1-w_1)w_3 \cdot E_3(w_1,w_2) +1)$. 

In general, if we define $E_n(w_1,w_2,\dots, w_{n-1})=\eptw(C_n,Z^n_{1,1})$, the expected propagation time of a $C_n$ with consecutive weights $w_1,w_2,\dots,w_{n-1}$ along the forcing path, we have $\eptw(C_n,Z^n_{1,1})=w_1(1-w_{n-1}) \cdot \eptw(C_n,Z^n_{2,1}) + (1-w_1)w_{n-1} \cdot \eptw(C_n,Z^n_{1,2})+(1-w_1)(1-w_{n-1}) \cdot \eptw(C_n,Z^n_{1,1})+w_1 w_{n-1} \cdot \eptw(C_n,Z^n_{2,2})+1 = \tfrac{1}{w_1+w_{n-1} - w_1 w_{n-1}} ( w_1(1-w_{n-1}) \cdot \eptw(C_n,Z^n_{2,1}) +
(1-w_1)w_{n-1} \cdot \eptw(C_n,Z^n_{1,2})+w_1 w_{n-1} \cdot \eptw(C_n,Z^n_{2,2})  +1 ) =$\\
$\tfrac{1}{w_1+w_{n-1} - w_1 w_{n-1}} ( w_1(1-w_{n-1}) \cdot E_{n-1} (w_2,w_3,\dots,w_{n-1}) + (1-w_1)w_{n-1} \cdot E_{n-1}(w_1,w_2,\cdots,w_{n-2})+ w_1w_{n-1} \cdot E_{n-2}(w_2,w_3,\dots,w_{n-2})  +1 )$.

With the recurrence, we can find $\eptw(C_n,Z^n_{1,1})$. To find $\eptw(C_n)$ simply take the minimum over all cyclic placements of $Z^n_{1,1}$.

\section{Confidence propagation time for families of edge-weighted graphs}\label{s:cpt}

In this section, we determine confidence propagation times for special families of graphs including complete graphs, stars, paths, and cycles. We start with complete graphs and stars.

\begin{thm}
If $G = K_n$ with edge weights $w_{\left\{i, j\right\}} \in (0, 1)$, then $\cptw(G,\alpha)$ is $\lceil \log _{P_0} (1 - \alpha) \rceil$, where $P_0$ is the minimum of $\prod_{j \neq i} (1-w_{\left\{i, j\right\}})$ over all $1 \leq i \leq n$.
\end{thm}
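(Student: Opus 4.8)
The plan is to mimic the structure of the preceding theorem on $K_n$: reduce to the single remaining white vertex, then compute the confidence time directly. First I would fix a minimum weighted zero forcing set $Z$ consisting of all but one vertex $v_n$, so that after the initial coloring only $v_n$ is white. On each step, $v_n$ is forced if at least one of its $n-1$ blue neighbors picks it up; the probability that $v_n$ remains white after one step is $\prod_{j \neq n}(1-w_{\{n,j\}})$, a quantity I will call $P(Z)$. Since the trials on successive steps are independent, the probability that $v_n$ is still white after $t$ steps is $P(Z)^t$, so $G$ is fully colored after $t$ steps with probability $1 - P(Z)^t$.

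Next I would unwind the definition of $\cptw$. We need the least $t$ with $1 - P(Z)^t \ge \alpha$, i.e. $P(Z)^t \le 1-\alpha$. Because $P(Z) \in (0,1)$, taking logarithms (base $P(Z)$, which reverses the inequality) gives $t \ge \log_{P(Z)}(1-\alpha)$, so $\cptw(G,Z,\alpha) = \lceil \log_{P(Z)}(1-\alpha)\rceil$. One small point to handle cleanly: if $1-\alpha \ge 1$ (i.e. $\alpha \le 0$) the ceiling could be nonpositive, but the intended regime is $\alpha \in (0,1)$, where $\log_{P(Z)}(1-\alpha) > 0$, so the ceiling is a positive integer as expected; I would note this in passing.

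Finally, I would minimize over all minimum weighted zero forcing sets $B$. Every minimum zero forcing set of $K_n$ consists of $n-1$ vertices and leaves exactly one vertex $v_i$ white, for which $P(B) = \prod_{j\neq i}(1-w_{\{i,j\}})$. The map $t \mapsto \lceil \log_{t}(1-\alpha)\rceil$ is monotone in $t$ over $(0,1)$ (for fixed $\alpha\in(0,1)$, $\log_t(1-\alpha) = \frac{\ln(1-\alpha)}{\ln t}$ is increasing in $t$ since both logs are negative), so $\cptw(G,B,\alpha)$ is minimized by choosing $B$ so that $P(B)$ is as small as possible, namely $P_0 = \min_{1\le i\le n}\prod_{j\neq i}(1-w_{\{i,j\}})$. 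Hence $\cptw(G,\alpha) = \lceil \log_{P_0}(1-\alpha)\rceil$.

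I do not anticipate a genuine obstacle here; the argument is essentially the observation that, once reduced to one white vertex, the process is a geometric waiting time. The only steps requiring a little care are the direction of the inequality when taking $\log$ to a base less than $1$, and verifying the monotonicity used in the final minimization so that picking the smallest $P(B)$ is actually optimal. If anything, the subtlety worth a sentence is confirming that no minimum zero forcing set can do better than $P_0$ because all such sets are of the stated form in $K_n$.
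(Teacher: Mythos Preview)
Your proposal is correct and follows essentially the same approach as the paper: reduce to a single white vertex, observe that the probability it remains white after $m$ steps is $P^m$, solve $1-P^m\ge\alpha$, and minimize over the choice of the omitted vertex. Your write-up is in fact more careful than the paper's, which simply asserts the minimization step without checking the monotonicity of $P\mapsto\lceil\log_P(1-\alpha)\rceil$ or the inequality reversal when taking a logarithm with base in $(0,1)$.
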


\begin{proof}
Suppose $v_i$ is the white vertex. Then the probability that $v_i$ is colored at any given moment is $1 -\prod_{j \neq i} (1-w_{\left\{i, j\right\}}) = 1 - P$, so the probability that it takes $m$ steps or less is the complement of the probability that it takes more than $m$ steps, which is just $1 - P^m$. Then minimize $P$ over all choices of $v_i$, say at $P_0$, and solve for the minimum $m$ with $1 - P^m \geq \alpha$, i.e., $m = \lceil \log _{P_0} (1 - \alpha) \rceil$.
\end{proof}

\begin{thm}
If $G = K_{1,n}$ with edge weights $p_1 \leq p_2 \leq \dots \leq p_n$, then $\cptw(G, \alpha)$ is the minimum value of $m$ for which $p_n (1-P) \frac{\frac{1 - P^m}{1-P}  - \frac{1 - (1-p_n)^m}{p_n}}{P - (1 - p_n)} \geq \alpha$.
\end{thm}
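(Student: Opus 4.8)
The plan is to analyze the Markov chain for the star $K_{1,n}$ with the minimum zero forcing set $Z_1$ consisting of $n-1$ outer vertices, where the excluded outer vertex is chosen to have the largest weight $p_n$ (this choice will be justified at the end as the one minimizing the confidence time, exactly as it minimizes expected time in the earlier star theorem). There are essentially three relevant states: the initial state $S_1$ (only $Z_1$ blue, so the center $v_0$ and the outer vertex $v_n$ are white), the intermediate state $S_2$ (the center has been forced but $v_n$ is not yet), and the final state $S_3$. Note there is no "$v_n$ forced but center not forced" state, since $v_n$ can only be forced through the center. From $S_1$, the center is forced in one step with probability $1-P$ where $P=\prod_{j\neq n}(1-p_j)$, so the chain stays in $S_1$ with probability $P$ and moves to $S_2$ with probability $1-P$. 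From $S_2$, the chain moves to $S_3$ with probability $p_n$ and stays with probability $1-p_n$.

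The main computation is to find $\Pr[\ptw(K_{1,n},Z_1)\le m] = (M^m)_{13}$. First I would observe that the number of steps is the sum of two independent geometric-type waiting times: a wait $T_1\sim\text{Geom}(1-P)$ to leave $S_1$, followed by a wait $T_2\sim\text{Geom}(p_n)$ to leave $S_2$, so $\ptw = T_1+T_2$. Then $\Pr[\ptw\le m]$ can be obtained either by convolving the two distributions or, more cleanly, by summing a telescoping series: $\Pr[\ptw\le m] = \sum_{k=1}^{m-1}\Pr[T_1=k]\,\Pr[T_2\le m-k]$. Plugging in $\Pr[T_1=k]=(1-P)P^{k-1}$ and $\Pr[T_2\le j]=1-(1-p_n)^j$ and summing the two resulting geometric series in $k$ gives, after simplification, the closed form
\[
\Pr[\ptw(K_{1,n},Z_1)\le m] = p_n(1-P)\cdot\frac{\frac{1-P^m}{1-P}-\frac{1-(1-p_n)^m}{p_n}}{P-(1-p_n)}.
\]
The key algebraic identity here is $\sum_{k=1}^{m-1}P^{k-1}(1-p_n)^{m-k} = \frac{P^{m-1}-(1-p_n)^{m-1}}{P-(1-p_n)}$ (a finite geometric sum with ratio $P/(1-p_n)$), which is what produces the denominator $P-(1-p_n)$; one should remark that when $P=1-p_n$ this is interpreted as the limit $(m-1)P^{m-2}$, so the formula extends by continuity. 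Once the CDF formula is established, $\cptw(K_{1,n},Z_1,\alpha)$ is by definition the least $m$ with this expression $\ge\alpha$.

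The hard part — and the step I would flag as the main obstacle — is not the computation but justifying that the minimum over all minimum zero forcing sets is attained by leaving out the heaviest-weight outer vertex, i.e. that the CDF above is monotone in the "right" direction as a function of the single parameter $p_n$ (with $P$ the product of the remaining weights, hence also changing). I would argue this by a coupling or interchange argument: replacing the excluded vertex's weight $p_n$ by a smaller weight and correspondingly including the old $p_n$ into the product $P$ can only slow the process stochastically, since a larger $p_n$ speeds the final force and a larger $P$ (closer to $1$) only helps delay nothing that matters — more carefully, one shows $\ptw$ with the heaviest vertex excluded is stochastically dominated by $\ptw$ for any other choice, using that $T_1$ depends only on $1-P$ and $T_2$ only on $p_n$, and that among the $n$ candidate partitions the one excluding $p_n$ simultaneously makes $1-P$ as large as possible and $p_n$ as large as possible. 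Since stochastic domination of $\ptw$ implies the dominating CDF is pointwise at least the other, the minimal $m$ achieving confidence $\alpha$ is smallest for that choice, which yields the stated formula with $p_n$ the maximum weight.
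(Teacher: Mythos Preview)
Your computation of the cumulative probability $\Pr[\ptw\le m]$ is correct and essentially the same as the paper's: the paper first computes the PMF $\Pr[\ptw=s]$ via the same finite geometric sum identity you invoke, then sums over $s\le m$, whereas you convolve the two geometric waiting times directly. Both routes reach the stated closed form, and your remark about the $P=1-p_n$ case being a removable singularity is a nice addition.

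The gap is in your optimality step. You assert that excluding the heaviest outer vertex $v_n$ ``simultaneously makes $1-P$ as large as possible and $p_n$ as large as possible,'' but the first claim is false. Writing $Q=\prod_{j=1}^n(1-p_j)$, one has $P=\prod_{j\ne i}(1-p_j)=Q/(1-p_i)$, which is \emph{increasing} in the excluded weight $p_i$; hence excluding $p_n$ makes $P$ as \emph{large} as possible and $1-P$ as \emph{small} as possible, the opposite of what you wrote. So there is a genuine tradeoff --- excluding $v_n$ speeds the second wait $T_2$ but slows the first wait $T_1$ --- and your stochastic-domination argument, which requires both parameters to move the right way, does not go through. (Indeed for $n=2$ the two choices give literally the same distribution, since the sum of independent geometrics is symmetric in the two parameters.) The paper's proof is itself terse here, simply asserting that the PMF is ``maximized when $p_i$ is maximized,'' so you were right to flag this as the crux; but your proposed resolution is incorrect, and a valid argument must account for the opposing effects on $P$ and $p_i$, for instance by directly comparing the CDF expressions for two adjacent choices $i$ and $i+1$.
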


\begin{proof}
We first find the probability that the coloring process completes in exactly $s$ steps if we initially color all vertices except the center and vertex $i$. We let $P = \prod_{j \neq i} (1-p_j)$. At some point, we must have two forces, which have probabilities $1 - P$ and $p_i$ of occurring. We pick some number of the remaining $s-2$ steps to be non-forces before coloring the center, so our total probability is
$$p_i(1-P) \cdot \sum_{j=0}^{s-2} P^j (1-p_i)^{s-2-j} = p_i (1-P) \cdot \frac{P^{s-1} - (1-p_i)^{s-1}}{P - (1 - p_i)},$$which is maximized when $p_i$ is maximized, or $i=n$.

Then the probability that $K_{1,n}$ is all blue after at most $m$ steps is
$$p_n (1-P) \cdot \sum_{s=2}^{m} \frac{P^{s-1} - (1-p_n)^{s-1}}{P - (1 - p_n)} = p_n (1-P) \frac{\frac{1 - P^m}{1-P}  - \frac{1 - (1-p_n)^m}{p_n}}{P - (1 - p_n)}.$$
\end{proof}

In the next two results, we evaluate confidence propagation times for paths and cycles where all of the edge weights are equal. We use generating functions in both proofs.

\begin{thm}
If $G = P_n$ with edge weights all equal to $p \in (0, 1)$, then $\cptw(G, \alpha)$ is the minimum value of $m$ for which $\sum_{s=n-1}^{m} \binom{m}{s}p^s(1-p)^{m-s} \geq \alpha$. 
\end{thm}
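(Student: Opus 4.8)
The plan is to reduce the statement about $P_n$ with all edge weights equal to $p$ to a simple Bernoulli-trials count. First I would observe that a minimum zero forcing set of $P_n$ is a single endpoint, say $v_1$, and that from the state where $v_1,\dots,v_k$ are blue, the unique possible force is $v_k \to v_{k+1}$, which succeeds independently with probability $p$ on each step. Thus the coloring process is completely described by the following: at each step, flip a coin with success probability $p$; a success advances the ``frontier'' by one, a failure does nothing. The graph is fully colored exactly when $n-1$ successes have occurred (the path has $n-1$ edges, equivalently $n-1$ vertices beyond the initial one). So the process is finished within $m$ steps if and only if a sequence of $m$ independent $\mathrm{Bernoulli}(p)$ trials contains at least $n-1$ successes.

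Next I would convert this into the stated formula. The probability of getting exactly $s$ successes in $m$ trials is $\binom{m}{s}p^s(1-p)^{m-s}$, so the probability of completing the process in at most $m$ steps is $\sum_{s=n-1}^{m}\binom{m}{s}p^s(1-p)^{m-s}$. By the definition of $\cptw(G,B,\alpha)$ this is the least $m$ for which this sum is at least $\alpha$, and since there is (up to symmetry) essentially only one minimum weighted zero forcing set to consider — both endpoints give identical processes because all weights are equal — we get $\cptw(P_n,\alpha)$ equal to the same quantity. If the paper wants a generating-function flavor (as the sentence preceding the theorem suggests), I would instead encode ``success'' by the variable $x$ and ``failure'' by $1$, so that the per-step generating function is $(1-p) + px$, the $m$-step generating function is $((1-p)+px)^m$, and the probability of at least $n-1$ successes in $m$ steps is obtained by summing the coefficients of $x^{n-1},\dots,x^m$; this reproduces the same binomial sum.

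I would then note monotonicity: as a function of $m$, the quantity $\sum_{s=n-1}^{m}\binom{m}{s}p^s(1-p)^{m-s}$ is nondecreasing (adding another trial can only increase the chance of eventually reaching $n-1$ successes), and it tends to $1$, so the minimum $m$ achieving the threshold $\alpha$ is well defined. That finishes the argument.

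Honestly, there is no serious obstacle here; the only thing to be careful about is the bookkeeping of ``$n-1$'': one must confirm that completing $P_n$ requires exactly $n-1$ forces (not $n$ or $n-2$), which follows because $|V(P_n)|=n$ and the initial set has size $\Z(P_n)=1$. A secondary point worth a sentence is justifying that no other minimum zero forcing set does better — here both endpoints yield stochastically identical processes since the weights are all equal, and any non-endpoint single vertex is not a zero forcing set of $P_n$, so the minimum in the definition of $\cptw(G,\alpha)$ is attained by the endpoint computation.
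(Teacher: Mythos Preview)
Your proposal is correct and follows essentially the same approach as the paper: the paper also models the process as starting at $1$ and adding $1$ with probability $p$ (i.e., Bernoulli trials), and extracts the binomial coefficients from the generating function $(px+(1-p))^m$, which is exactly the alternative phrasing you offered. Your additional remarks on the symmetry of the two endpoints and the monotonicity in $m$ are sound refinements that the paper leaves implicit.
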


\begin{proof}
The weighted zero forcing of $G$ is equivalent to the following process. We start with the number $1$. Each step, we add $1$ with probability $p$, and $0$ with probability $1-p$. We will determine the probability that we reach $n$ in $m$ steps or less.

Consider the generating function $(px + (1-p))^m$, where the coefficient of $x^s$ is the probability that the number is $s+1$ after $m$ steps. The coefficient of $x^s$ is $\binom{m}{s}p^s(1-p)^{m-s}$, and the probability in question is
$$\sum_{s=n-1}^{m} \binom{m}{s}p^s(1-p)^{m-s}$$. Then $pt(P_n, \alpha)$ is the least $m$ for which the sum is at least $\alpha$.
\end{proof}

\begin{thm}
If $G = C_n$ with edge weights all equal to $p \in (0, 1)$, then $\cptw(G, \alpha)$ is the minimum value of $m$ for which $\sum _{s = n-2} ^{m} \binom{2m}{s}p^s(1-p)^{2m-s} \geq \alpha$. 
\end{thm}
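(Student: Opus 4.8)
The plan is to mimic the argument for $P_n$ with equal weights, but to account for the fact that a cycle $C_n$ has two forcing chains emanating from the initial zero forcing set, which by symmetry we may take to be two adjacent vertices. Label the two chains so that chain $1$ has colored $a$ vertices and chain $2$ has colored $b$ vertices beyond the initial set, with $a+b = n$ meaning the whole cycle is blue. First I would observe that, because all edge weights equal $p$, each of the (at most two) available forces at any step succeeds independently with probability $p$, so the process is equivalent to the following: we track a pair $(a,b)$ starting at $(1,1)$; at each step we independently attempt to increment $a$ (success probability $p$) and independently attempt to increment $b$ (success probability $p$), until $a+b \ge n$. Since we only care about the sum $a+b$, and the two chains never interfere until they meet, the increments to the total over $m$ rounds are a sum of $2m$ independent Bernoulli$(p)$ trials, and we need the total of $2$ (the initial vertices) plus these increments to reach $n$, i.e.\ we need at least $n-2$ successes among $2m$ trials.

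The key steps, in order, are: (1) reduce to the two-adjacent-vertex zero forcing set by cyclic symmetry, noting all such sets give the same distribution since all weights are equal; (2) set up the generating function $(px + (1-p))^{2m}$, whose coefficient of $x^s$ is $\binom{2m}{s}p^s(1-p)^{2m-s}$, the probability of exactly $s$ total forcing successes across both chains in $m$ rounds; (3) argue carefully that "the cycle is fully colored after $m$ rounds" is equivalent to "at least $n-2$ of these $2m$ trials succeed" — this requires checking that once the two chains meet (the sum first reaches $n$), the process is complete, and that no round is wasted before then because in every incomplete state there are exactly two frontier vertices each forced with probability $p$; (4) conclude that the probability of completion within $m$ rounds is $\sum_{s=n-2}^{m}\binom{2m}{s}p^s(1-p)^{2m-s}$, and $\cptw(C_n,\alpha)$ is the least $m$ making this at least $\alpha$.

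The main obstacle is step (3): justifying that the $2m$ Bernoulli trials genuinely model the process. One must confirm that at every stage of an incomplete coloring there are exactly two vertices that can force (one at the head of each chain), so that exactly two independent weight-$p$ trials occur each round; that the chains cannot "overshoot" in a way that changes the count (when $a+b$ would exceed $n$, the last vertex is simply forced and the process stops, which matches "$\ge n-2$ successes"); and that the two chains, being on a cycle, meet precisely when the sum of colored vertices equals $n$. A subtle point to address is the endgame when only one white vertex remains: then only one force is available, so only one trial matters that round — but this is still consistent because having reached $s = n-3$ successes and needing one more is exactly captured by the binomial tail $\sum_{s \ge n-2}$, since reaching $n-2$ successes in $2m$ trials, in any order, corresponds to a valid completed coloring by round $m$. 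Once this equivalence is nailed down, steps (1), (2), and (4) are routine, paralleling the path case.
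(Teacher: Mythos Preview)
Your approach coincides with the paper's: reduce to two adjacent starting vertices, pass to the generating function $(px+(1-p))^{2m}$, and read off the binomial tail.  The paper writes the one-step increment generating function as $(p^2x^2+2p(1-p)x+(1-p)^2)^m$ and then factors; you go directly to $2m$ independent Bernoulli trials---these are the same argument.

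Your endgame worry, however, rests on a false premise and should be deleted rather than patched.  When exactly one white vertex $v$ remains on the cycle, \emph{both} of its blue neighbors have $v$ as their unique white neighbor, so the standard color-change rule permits \emph{two} independent forcing attempts at $v$ that round, each succeeding with probability $p$.  Hence every incomplete round really does contribute two Bernoulli$(p)$ trials, and the coupling is exact throughout: if $S_m$ is the number of successes among the $2m$ trials, the blue-arc length after round $m$ is $\min(2+S_m,n)$, so completion by round $m$ is precisely $S_m\ge n-2$ with no special case.  Your sentence ``then only one force is available, so only one trial matters that round'' is simply wrong, and the subtlety you then try to argue away does not exist.
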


\begin{proof}
A minimum zero forcing set of $G$ is two adjacent vertices, so we can consider the equivalent coloring process on $P_n$ with both endpoints blue. We start with the number $2$. Each step, we add $0$, $1$, or $2$ with probability $(1-p)^2, 2p(1-p), p^2$ respectively. We will determine the probability that we reach $n$ in $m$ steps or less.

Then we proceed as usual: consider the generating function $(p^2x^2 + 2p(1-p)x + (1-p)^2)^m= (px + (1-p))^{2m}$. The coefficient of $x^s$, or
$$\binom{2m}{s}p^s(1-p)^{2m-s}$$is the probability that our number is $s+2$ after $m$ steps. So the probability that we reach $n$ in $m$ steps or less is
$$\sum _{s = n-2} ^{m} \binom{2m}{s}p^s(1-p)^{2m-s}.$$
\end{proof}

In the next result, we evaluate confidence propagation times for paths with any distinct edge weights in $(0, 1)$.

\begin{thm}
If $G = P_{n+1}$ with distinct edge weights $w_1, w_2, \dots, w_n \in (0, 1)$ in path order, then $\cptw(G, \left\{1\right\},\alpha)$ is the minimum value of $t$ with $w_1 w_2 \dots w_n \sum_{i=1}^{n} \dfrac{(1-w_i)^{n-1} - (1 - w_i)^{t}}{w_i W_i}\geq \alpha$, where $W_i = \prod_{j \neq i}(w_j - w_i)$.
\end{thm}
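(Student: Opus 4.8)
The plan is to reduce the weighted zero forcing process on $P_{n+1}$ started at the single endpoint $\{1\}$ to a sum of independent geometric random variables, and then compute the tail probability exactly by a partial-fractions expansion. As in the path argument for $\eptw(P_n)$, the unique forcing chain colors vertex $i+1$ from vertex $i$, and once $i$ vertices are blue the probability that the next force succeeds on a given round is $w_i$. Hence the number of rounds needed is $T = X_1 + X_2 + \dots + X_n$ where the $X_i$ are independent and $X_i \sim \mathrm{Geom}(w_i)$ (supported on $\{1,2,\dots\}$). The quantity $\cptw(G,\{1\},\alpha)$ is then the least $t$ with $\Pr[T \le t] \ge \alpha$, so the whole task is to get a closed form for $\Pr[T \le t]$.

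The key computation is the probability generating function. Each $X_i$ has pgf $\mathbb{E}[z^{X_i}] = \frac{w_i z}{1-(1-w_i)z}$, so $\mathbb{E}[z^T] = \prod_{i=1}^n \frac{w_i z}{1-(1-w_i)z} = \left(\prod_{i=1}^n w_i\right) \frac{z^n}{\prod_{i=1}^n (1-(1-w_i)z)}$. Because the $w_i$ are distinct, the $(1-w_i)$ are distinct, so I would do a partial-fraction decomposition $\frac{z^n}{\prod_i (1-(1-w_i)z)} = \sum_{i=1}^n \frac{c_i}{1-(1-w_i)z}$ with $c_i = \frac{(1-w_i)^{?}}{\prod_{j\ne i}(\,\cdot\,)}$ obtained by the usual residue formula; tracking the powers of $(1-w_i)$ carefully will produce the factor $(1-w_i)^{n-1}$ and the denominator $w_i W_i$ with $W_i = \prod_{j\ne i}(w_j - w_i)$ that appear in the statement (note $1-(1-w_j)/(1-w_i)$-type expressions simplify to ratios of $(w_j - w_i)$). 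Extracting the coefficient of $z^k$ from $\frac{c_i}{1-(1-w_i)z}$ gives $c_i(1-w_i)^k$, so $\Pr[T = k] = \left(\prod_i w_i\right)\sum_i c_i (1-w_i)^{k}$ for $k \ge n$, and then $\Pr[T \le t] = \left(\prod_i w_i\right)\sum_{i=1}^n c_i \sum_{k=n}^{t} (1-w_i)^{k}$. Summing the inner geometric series, $\sum_{k=n}^t (1-w_i)^k = \frac{(1-w_i)^n - (1-w_i)^{t+1}}{w_i}$, and after absorbing signs and the residue constants into $W_i$ this should collapse to $w_1\cdots w_n \sum_{i=1}^n \frac{(1-w_i)^{n-1} - (1-w_i)^t}{w_i W_i}$ as claimed; I would double-check the exponent bookkeeping ($n-1$ versus $n$, and $t$ versus $t+1$) against a small case such as $n=2$, where $T$ is the sum of two geometrics and the formula can be verified directly.

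The main obstacle is purely algebraic: pinning down the residue coefficients $c_i$ and showing that the powers of $(1-w_i)$ and the shift in the exponent of $z$ conspire to give exactly $(1-w_i)^{n-1}$ in the numerator and $w_i W_i$ (not, say, $W_i$ alone or $w_i \prod_{j\ne i}(w_i - w_j)$) in the denominator, including getting the overall sign right given that $W_i$ is not symmetric in $i$. One clean way to handle the residue is to write $f(z) = z^n / \prod_j(1-(1-w_j)z)$, so $c_i = \lim_{z \to 1/(1-w_i)} (1-(1-w_i)z) f(z)$; substituting $z = 1/(1-w_i)$ into $z^n$ and into $\prod_{j\ne i}(1-(1-w_j)z)$ and simplifying each factor $1 - (1-w_j)/(1-w_i) = (w_j - w_i)/(1-w_i)$ will, after collecting the $(1-w_i)$ powers, yield the stated form. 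Everything else — independence of the $X_i$, the pgf of a geometric, coefficient extraction, and the finite geometric sum — is routine, and the minimization over $m$ (here $t$) is immediate since only the endpoint $\{1\}$ is used, so no optimization over forcing sets is needed.
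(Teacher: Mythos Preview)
Your approach is essentially the paper's: both compute $\Pr[T=m]=\bigl(\prod_i w_i\bigr)\,h_{m-n}(1-w_1,\dots,1-w_n)$ and then use the closed form $h_k(x_1,\dots,x_n)=\sum_i x_i^{k+n-1}/\prod_{j\ne i}(x_i-x_j)$ (the paper simply quotes this identity, while you recover it by partial fractions on the pgf), after which summing the geometric series over $m$ from $n$ to $t$ yields the stated expression. One small slip worth fixing: $\dfrac{z^n}{\prod_i(1-(1-w_i)z)}$ is an improper rational function, so its partial-fraction expansion has an extra constant term you omitted; this does not hurt you, since that constant only affects the $z^0$ coefficient and $T\ge n\ge 1$, but it is cleaner to factor out $z^n$ first and expand $\prod_i 1/(1-(1-w_i)z)$, which gives $A_i=(1-w_i)^{n-1}/W_i$ directly and removes all the exponent and sign bookkeeping you flagged as the main obstacle.
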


\begin{proof}
Consider a path $P_{n+1}$ with $n+1$ vertices, and $n$ edges with weights $w_1, w_2, \dots, w_n$ in path order. Suppose that the first blue vertex is the endpoint of the $w_1$ edge. Let $W_i = \prod_{j \neq i}(w_j - w_i)$ for $1 \leq i \leq n$, let $A_{s, t}$ denote the set of $s$-tuples of non-negative integers that add to $t$, and for any $a \in A_{s, t}$ let $a_i$ denote the $i^{th}$ term in $a$. Then we claim that the probability the entire graph is blue after $t$ turns is
$$w_1 w_2 \dots w_n \left [ \sum_{i=1}^{n} \dfrac{(1-w_i)^{n-1} - (1 - w_i)^{t}}{w_i W_i} \right ].$$ 

To prove this, we use the fact that if $X_{n, i} =\prod_{j \neq i} (x_i - x_j)$, then $\displaystyle \sum_{a \in A_{n, k}} x_1^{a_1} x_2^{a_2} \dots x_n^{a_n} = \displaystyle \sum_{i=1}^n \frac{x_i^{k+n-1}}{X_{n, i}}$ for all integers $n \geq 2$, $k \geq 1-n$. Note that for the entire graph to be blue, there must have been at least $n$ forces, each with probability $w_1, w_2, \dots , w_n$, and for the remaining $t-n$ turns, there must have been no forces, and these non-forcing turns had probabilities of $1-w_1, 1-w_2, \dots, 1-w_n$.

Thus, the probability that the whole graph is blue after exactly $t$ turns is
$$w_1 w_2 \dots w_n \sum \limits_{a \in A_{n, t-n}} (1-w_1)^{a_1} (1-w_2)^{a_2} \dots (1-w_n)^{a_n}.$$Using the identity, this sum reduces to
$$w_1 w_2 \dots w_n \sum \limits_{i=1}^n \frac{(1-w_i)^{t-1}}{W_i}.$$ So the probability that all vertices in the graph are blue after at most $t$ turns is
$$w_1 w_2 \dots w_n \sum \limits_{s=n}^t \sum \limits_{i=1}^n \frac{(1-w_i)^{s-1}}{W_i} = w_1 w_2 \dots w_n \left [ \sum_{i=1}^{n} \dfrac{(1-w_i)^{n-1} - (1 - w_i)^{t}}{w_i W_i} \right ].$$

Thus, $\cptw(G, \alpha)$ is the minimum value of $t$ such that the last sum is at least $\alpha$.
\end{proof}

\section{Concluding remarks and further directions}\label{s:open}

In this paper, we defined propagation time for weighted zero forcing and found formulas using Markov matrices for the expected propagation times and confidence propagation times of all edge-weighted graphs with edge weights in $(0, 1)$. For special families of graphs including complete graphs, complete bipartite graphs, paths, and cycles, we found much simpler formulas for both propagation times. It would be interesting to investigate these quantities for other special families of graphs such as ladders, complete $k$-partite graphs, hypercubes, grids, spiders, caterpillars, and complete binary trees.

Besides zero forcing number and propagation time, another related parameter of graphs is the throttling number, which is the minimum of $k+\pt_k(G)$ over all positive integers $k$, where $\pt_k(G)$ is the minimum possible propagation time of $G$ with standard zero forcing using $k$ initial blue vertices. Throttling has been studied for standard zero forcing \cite{BY13}, positive semidefinite zero forcing \cite{PSDthrottle, trth}, skew zero forcing \cite{skewth}, probabilistic zero forcing \cite{GH18-PZF, NS}, and pursuit evasion games \cite{CRthrottle, CRthrottle2, gthr}. It is natural to define and investigate throttling for weighted zero forcing, both for expected propagation time and confidence propagation time.

Some other directions for future research are to define weighted positive semidefinite zero forcing, weighted skew zero forcing, and variants of probabilistic zero forcing that depend on edge weights. For each of the variants of edge-weighted zero forcing, it would be interesting to investigate properties and applications of their Markov matrices.

\section{Acknowledgments}

CrowdMath is an open program created by the MIT Program for Research in Math, Engineering, and Science (PRIMES) and Art of Problem Solving that gives high school and college students all over the world the opportunity to collaborate on a research project. The 2019 CrowdMath project is online at http://www.artofproblemsolving.com/polymath/mitprimes2019.

\end{document}